\documentclass[12pt]{amsart}
\usepackage{amssymb,latexsym}

\usepackage{amsfonts}
\usepackage{amsthm}
\usepackage{amsmath}
\usepackage{amscd}
\usepackage{enumerate}
\newtheorem*{definition*}{Definition}
\newtheorem*{question*}{Question}

\theoremstyle{remark}

\numberwithin{equation}{section}

\theoremstyle{plain}
\newtheorem{lemma}{Lemma}[section]
\newtheorem{theorem}[lemma]{Theorem}

\newtheorem{corollary}[lemma]{Corollary}

\theoremstyle{definition}

\newtheorem{question}[lemma]{Question}

\numberwithin{equation}{section}
\def\begeq{\stepcounter{lemma}\begin{equation}}

\newcommand{\lra}{\longrightarrow}

\newcommand{\nn}{\mathbb{N}}

\newcommand{\F}{\mathbb{F}}
\newcommand{\Q}{\mathbb{Q}}

\newcommand{\Z}{\mathbb{Z}}

\DeclareMathOperator{\res}{res} 
\DeclareMathOperator{\Gal}{Gal} \DeclareMathOperator{\gr}{gr}
 
\DeclareMathOperator{\Ker}{Ker}

\begin{document}

\title{The Bloch-Kato conjecture and galois theory}
\author[D. Karagueuzian]{Dikran Karagueuzian}
\address{SUNY at Binghamton \\ Department of Mathematical
Sciences \\ P. O. Box 6000 \\ Binghamton, NY 13902-6000}
\email{dikran@math.binghamton.edu}
\author[J. Labute]{John Labute}
\address{McGill University \\ Department of Mathematics and
Statistics \\ 805 Sherbrooke Street West \\ Montreal \\ Quebec \\
Canada H3A 2K6}
\email{labute@math.mcgill.ca}
\author[J. Min\'{a}\v{c}]{J\'{a}n Min\'{a}\v{c}}
\address{The University of Western Ontario \\ Department of
Mathematics \\ Middlesex College \\ London \\ Ontario \\ Canada N6A 5B7}
\email{minac@uwo.ca}
\thanks{The second and third authors are partially supported by
NSERC grants.}

\date{October 11, 2009}

\begin{abstract}
We investigate the relations in Galois groups of maximal
$p$-extensions of fields, the structure of their natural
filtrations, and their relationship with the Bloch-Kato
conjecture proved by Rost and Voevodsky with Weibel's
patch. Our main focus is on the third degree, but we
provide examples for all degrees.
\end{abstract}

\dedicatory{To Paulo Ribenboim who showed us the Galois road}

\maketitle

\parskip=10pt plus 2pt minus 2pt

\section{Introduction}\label{S0}

Let $p$ be a fixed prime number and let $F$ be a field such that
$F$ contains a primitive $p$-th root of unity $\zeta_p$. Let $F_s$
be the separable closure of $F$ and $\Gal(F_s/F)$ be the absolute
Galois group of $F$. Let $H^*(F) = H^*(\Gal(F_s/F),
\mathbb{Z}/p\mathbb{Z})$ be the Galois cohomology of $F$ in
coefficients $\mathbb{Z}/p\mathbb{Z}$. (From now on we shall omit
the coefficients of our cohomology groups as they will always be
$\mathbb{Z}/p\mathbb{Z}$.) In \cite{mi}, Milnor defined the group
$K_nF$ by generators and relations for any $n\in \nn \cup \{0\}$.
The generators are $n$-tuples $\{a_1,\dots ,a_n\}$ of elements of
$F^*$, and the defining relations are the multiplicativity in each
component and the Steinberg relation $\{a_1,\dots ,a_n\} = 0$ if
$a_i + a_j = 1$ for some $i\not= j$. Set $k_nF = K_nF/pK_nF$ and
$k_*F = \bigoplus_{n\geq 0} k_nF$. Thus $k_*F$ is the Milnor
K-theory of $F$ modulo $p$. In the same paper Milnor defined a
graded homomorphism $h_*:k_*F\to H^*(F)$ and implicitly
conjectured that it is an isomorphism when $p=2$. Because of the
later important work of Bloch and Kato for any prime $p$, the
general case is now known as the Bloch-Kato conjecture.

Let $F(p)$ be the maximal $p$-extension of $F$.  This means that
$F(p)$ is a union of Galois extensions $K/F$, such that
$\Gal(K/F)$ is a $p$-group, in a fixed $F_s$ of $F$.  Let $G$ be
the Galois group of $F(p)/F$.  Observe that $\inf:H^1(G)\to
H^1(F)$ is an isomorphism and the Steinberg relations hold in
$H^2(G)$.  Then $h_*:k_*F\to H^*(F)$ factors through $h_*:k_*F\to
H^*(G)$. A simple application of the Hochschild-Serre spectral
sequence shows that if $h_n:k_nF\to H^n(F)$ is isomorphic for all
fields $F$ then $h_n:k_nF\to H^n(G)$ is an isomorphism for all
fields $F$. Also see \cite[page 97, for the case $p=2$]{g-m}. In
the paper \cite{m-s}, Merkurjev and Suslin proved that $h_2$ is an
isomorphism. Recently Rost and Voevodsky, with Weibel's patch,
proved that $h_n$ is an isomorphism for all $n\in \nn$. (See
\cite{ha-w}, \cite{ro1}, \cite{ro2}, \cite{voe1}, \cite{voe2},
\cite{voe3}, and \cite{wei1}, \cite{wei2}.) The cohomology of
Galois groups of maximal pro-$p$-extensions of fields reflects
some properties that the class of these groups share. The
relationship between the structure of groups and their cohomology
groups is in general nontrivial and it is quite mysterious. We are
interested in the group theoretic meaning of the Bloch-Kato
conjecture.

We investigate some strong, and at the same time simple conditions
on the relations of $\Gal(F(p)/F)$ for any field $F$ which is
closely related to the Bloch-Kato conjecture. These conditions say
that all relations of $G=\Gal(F(p)/F)$ are generated by relations
of small weight. This will be made more precise in section 4. For
related work on the structure of $\Gal(F(p)/F)$ and its relations
with cohomology, see \cite{a-k-m}, \cite{j-w}, \cite{ko1},
\cite{ko2}, \cite{mi-sp1} and \cite{n-s-w}.

Let $\{\sigma_i\}_{i\in I}$ be a minimal set of generators of $G$. (See
\cite[Chapter~4]{ko1}.) We assume that $I$ is well ordered.  Let $S$ be a free
pro-$p$-group with a minimal set of generators $\{s_i\}_{i\in I}$.  By
sending $s_i$ to $\sigma_i$ we obtain a continuous homomorphism $\pi :S\to
G$.  We set $R = Ker(\pi)$.

For a pro-$p$-group $G$, we define $G^{(n+1)} =
(G^{(n)})^p[G^{(n)},G]$ and $G^{(1)} = G$.  Then $G^{(n)}$ is a
closed normal subgroup of $G$. We denote the quotient by
$G^{[n]}$.  We set $R^{(1,S)} = R$ and $R^{(n+1,S)} =
(R^{(n,S)})^p[R^{(n,S)},S]$ for $n\geq 1$.  Since $S$ and $G$ have
the same cardinality of the minimal set of generators, then
$R\subset S^{(2)}$.  In general, we see by induction on $n$ that
$R^{(n,S)}\subset R\cap S^{(n+1)}$. Lemma~\ref{L2.2} says that
if $h_2$ is surjective, then $R^{(2,S)} = R\cap S^{(3)}$.

{\bf Example A.} The equality $R^{(2,S)}=R \cap S^{(3)}$ implies
that every $\sigma \in G=\Gal(F(p)/F)$ of finite order $\ne1$ has
order $p$ as follows. Suppose that $\sigma$ has order $\ge p^2$.
The subgroup of $G$ generated by $\sigma$ is a closed subgroup of
$G$ and if $L$ is its fixed field, we have $L(p) = F(p)$ and
$\Gal(L(p)/L) = \langle\sigma\rangle$. Hence we may assume that $F
= L$ and $G = \langle\sigma\rangle$. Moreover by taking a suitable
power of $\sigma$ we may assume that $\sigma$ has an order $p^2$.
Then $\sigma^{p^2}\in R \cap S^{(3)}$ but $\sigma^{p^2} \notin
R^{(2,S)}$ as $\sigma^{p^2}$ generates $R$ as a normal subgroup of
$S$. Hence the order of $\sigma$ is $p$. This fact also follows
from the work of Becker~\cite{be} where he also shows that $p=2$
and that two elements of $G$ order $2$ cannot commute. In fact
the only non-trivial finite subgroup of $G$ is the cyclic group
of order $2$.

{\bf Example B.} (See \cite[Section~9]{ch-e-m} for more examples.)
One can also deduce from the equality  $R^{(2,S)}=R \cap S^{(3)}$ that a minimal se of
relations among the $\sigma_i$ cannot contain a relation of the
form $[[\sigma_1, \sigma_2],\sigma_3]$ with $\sigma_1,
\sigma_2,\sigma_3$ distinct. Indeed, such a relator would be in
$R\cap S^{(3)}$ but not in $R^p[R,S]=R^{(2,S)}$.

Our first result is

\begin{theorem}\label{T1}
Assume that $G$ is $G_F(p)$ for some field $F$ containing a
primitive $p$th-root of $1$. Then $R^{(3,S)} = R\cap S^{(4)}$.
\end{theorem}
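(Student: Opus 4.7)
The inclusion $R^{(3,S)}\subseteq R\cap S^{(4)}$ is established by a straightforward induction on the definitions and is already implicit in the introduction. The content of the theorem is therefore the reverse inclusion, which is equivalent to the injectivity of the natural map $R/R^{(3,S)}\to S/S^{(4)}$ induced by $R\hookrightarrow S$.

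My plan is to split this injectivity by exploiting the central two-step filtration of $\bar R:=R/R^{(3,S)}$ inside $U:=S/R^{(3,S)}$. By construction $\bar R^{(2,U)}=R^{(2,S)}/R^{(3,S)}$ is central in $U$ and $\bar R^{(3,U)}=0$. The inclusions $R^{(2,S)}\subseteq S^{(3)}$ and $R^{(3,S)}\subseteq S^{(4)}$ give a commutative diagram whose top row is
\[
1\to R^{(2,S)}/R^{(3,S)}\to R/R^{(3,S)}\to R/R^{(2,S)}\to 1,
\]
whose bottom row is
\[
1\to S^{(3)}/S^{(4)}\to S/S^{(4)}\to S/S^{(3)}\to 1,
\]
and whose vertical maps are all induced by $R\hookrightarrow S$. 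A non-abelian snake-style chase reduces injectivity of the middle vertical to injectivity of the two outer ones. The right-hand vertical is Lemma~\ref{L2.2}, which rests on Merkurjev--Suslin. Everything is thus reduced to proving
\[
R^{(2,S)}\cap S^{(4)}\;\subseteq\;R^{(3,S)}.
\]

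To prove this claim I would use the Lyndon--Hochschild--Serre spectral sequence for $1\to R\to S\to G\to 1$. Since $R$ is a closed subgroup of the free pro-$p$-group $S$, it is itself free pro-$p$, so $H^i(R)=0$ for $i\ge 2$. Combined with $H^i(S)=0$ for $i\ge 2$ and the fact that $R\subseteq S^{(2)}$ (i.e. that inflation $H^1(G)\to H^1(S)$ is an isomorphism), the transgression $d_2$ on the $E_2$-page yields isomorphisms $H^1(R)^G\xrightarrow{\sim}H^2(G)$ and $H^1(G,H^1(R))\xrightarrow{\sim}H^3(G)$. The Bloch--Kato hypothesis $h_3\colon k_3F\xrightarrow{\sim}H^3(G)$ then says that $H^3(G)$ is spanned, as an $\Fp$-vector space, by triple cup products of $H^1(G)$, with the only relations being consequences of the Steinberg relations already present in $H^2(G)$. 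Carrying the cup-product structure through the above LHS identifications and dualizing (using $H^1(R)^G\cong(R/R^{(2,S)})^*$) should identify the resulting map with the natural map of graded Lie layers $R^{(2,S)}/R^{(3,S)}\to S^{(3)}/S^{(4)}$, so that Bloch--Kato's surjectivity forces the kernel $R^{(2,S)}\cap S^{(4)}/R^{(3,S)}$ to vanish.

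The main obstacle is making this last dualization precise: the map of abelian graded layers $R^{(2,S)}/R^{(3,S)}\to S^{(3)}/S^{(4)}$ must be matched, on the cohomological side, with a concrete triple-cup-product map on $H^1(G)^{\otimes 3}$, which requires untangling the LHS differentials in two consecutive degrees simultaneously. A technically cleaner reformulation is to work inside the restricted graded Lie algebra $\gr(S)=\bigoplus_n S^{(n)}/S^{(n+1)}$ and to show that the degree-$3$ component of $\ker(\gr(S)\twoheadrightarrow\gr(G))$ is generated by its degree-$2$ component under the bracket and the $p$-power operation. This ``quadratic-ideal'' statement in degree $3$ is exactly what the quadratic presentation of $k_*F$ (with Steinberg relations concentrated in degree $2$) combined with $h_3$ being an isomorphism is designed to produce.
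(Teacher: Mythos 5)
Your opening reduction is sound and coincides with the paper's: granting Lemma~\ref{L2.2}, the theorem is equivalent to $R^{(2,S)}\cap S^{(4)}\subseteq R^{(3,S)}$ (this is the equivalence (1)$\Leftrightarrow$(2) of Lemma~\ref{L2.3}), and the right way to attack that inclusion is by duality, i.e.\ by proving surjectivity of a map on $H^1$ of the relevant elementary abelian layers. But the core of your argument is the sentence ``carrying the cup-product structure through the above LHS identifications and dualizing \emph{should} identify the resulting map with $R^{(2,S)}/R^{(3,S)}\to S^{(3)}/S^{(4)}$,'' and that is precisely where all the work lies; as written it is not a proof, and the spectral sequence you chose cannot deliver it. The group $R^{(2,S)}/R^{(3,S)}$ is not a subquotient visible in the LHS spectral sequence of $1\to R\to S\to G\to 1$: its dual $H^1(R^{(2,S)}/R^{(3,S)})=H^1(R^{(2,S)})^{S}$ is identified (using $H^2(S)=0$) with $H^2(S/R^{(2,S)})$, so what must actually be proved is that $\inf\colon H^2(S^{[3]})\to H^2(S/R^{(2,S)})$ is surjective, and for that one needs the spectral sequences of the two \emph{central} extensions $1\to R/R^{(2,S)}\to S/R^{(2,S)}\to G\to1$ and $1\to S^{(2)}/S^{(3)}\to S^{[3]}\to S^{[2]}\to1$, compared term by term. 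That comparison has three graded pieces, not one. Your sketch addresses only the cup-product piece $E^{1,1}_\infty$, and even there the decisive step is missing: one must exhibit explicit lifts, e.g.\ $y_1\otimes(y_2\cup y_3)-y_3\otimes(y_1\cup y_2)$ mapping onto a generator $z_1\otimes(z_2\cup z_3)$ of $\Ker(\cup\colon H^1(G)\otimes H^2(G)\to H^3(G))$ with $z_1\cup z_2=0$, which is where the injectivity of $h_3$ and the presentation $l_3F=k_3F$ of Section~\ref{S1} enter. The piece $E^{0,2}_\infty$, which concerns Bocksteins of a basis of $H^1(R/R^{(2,S)})$ and requires checking that $d_3^{0,2}$ vanishes, is absent from your proposal altogether.

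Your closing ``cleaner reformulation'' does not close the gap either. It is true (this is Theorem~\ref{equiv}) that the statement is equivalent to the degree-$3$ part of $\gr(R)_{ind}$ being generated under bracket and $p$-power by its degree-$2$ part, but the assertion that this is ``exactly what the quadratic presentation of $k_*F$ combined with $h_3$ is designed to produce'' presupposes an identification of a piece of $\gr(S)/\gr(R)_{ind}$ with $k_*F$. No such identification is available a priori: Bloch--Kato controls $H^*(G)$, and transporting that control to the filtration quotients $S^{(n)}/S^{(n+1)}$ and to $R^{(n,S)}$ is the whole difficulty --- whether this can be done in all degrees is precisely the open Question~\ref{Q2}. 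So that paragraph restates the problem rather than solving it.
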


Theorem \ref{T1} is proved in section \ref{S2}.  It is natural
to ask the following question.

\begin{question}\label{Q2}
{\it{Let $G$ be isomorphic to a Galois group of a maximal p-extension of
a field. Under which condition is $R^{(n,S)} = R\cap S^{(n+1)}$ for all
$n\geq 1$?}}
\end{question}

More generally we ask:
\medskip

\begin{question}\label{Q3}
{\it{Let $G$ be isomorphic to the Galois group of the maximal
$p$-extension of a field. Describe the quotients $R\cap
S^{(n+1)}/R^{(n,S)}$ for all $n\geq 1$. }}
\end{question}

In the first two sections we show that $R^{(n,S)} = R\cap
S^{(n+1)}$ for $n\leq 3$. In section \ref{S3}, we give an
equivalent description of Question \ref{Q2} in the language of Lie
algebras.  We prove that, for quadratically defined pro-$2$-groups
as well as for $G=\Gal(F(2)/F)$ when $F$ is a totally imaginary
number field, the relation $R^{(n,S)}=R\cap S^{(n+1)}$ is valid
for each $n\geq 1$. For all odd primes $p$ we show the same holds
for $G=\Gal(F(p)/F)$ where $F$ is any local or global field.

\section{Preliminaries}\label{S1}

We use the following usual notation:  $[a]$ means
both an element of $F^*/F^{*p}$ and its corresponding element
$(a)$ in $H^1(F)$ or more generally in $H^1(\Gal(T/F))$ where
$T/F$ is any Galois extension of $F$ which contains $F^{(2)}$:  =
compositum of all cyclic extensions of degree $p$ of $F$, and is a subfield
of $F(p)$. Observe that by Kummer
theory we have the canonical isomorphism $F^*/F^{*p}\to
H^1(\Gal(T/F))$, which justifies our identification mentioned
above.

We will work in the category of pro-$p$-groups and make the usual
conventions that by ``subgroup'' we mean ``closed subgroup'', by
``generated'' we mean ``topologically generated'' and by ``morphism''
we mean ``continuous morphism''.  We shall always work with $\mod p$
cohomology except when we explicitly mention other coefficients.
For the standard facts for Galois cohomology we refer to \cite{se2}
and \cite{n-s-w}.

For an extension $1\to A\to B\to C\to 1$ of profinite groups and a
discrete $B$-module $M$ we have the corresponding Lyndon-Hochschild-Serre
(LHS, for short) spectral sequences $\{E^{p,q}_r,d^{p,q}_r\}$ where
$E^{p,q}_2 = H^p(C,H^q(A,M))$ (\cite{h-s} and \cite{n-s-w}).
Using LHS we also have the five-term exact sequence
\begin{align}
& 0\to H^1(C,M^A) \overset{\inf}{\longrightarrow} H^1(B,M)\overset
{\res}{\longrightarrow} H^1(A,M)^C \overset{d^{0,1}_2}{\longrightarrow}
H^2(C,M^A) \notag \\
& \overset{\inf}{\longrightarrow} H^2(B,M) \notag
\end{align}
where $\inf$ is the inflation map and $\res$ is the restriction
map.

Observe that $k_nF$ is isomorphic to the factor group of
\[ F^*/F^{*p}\otimes\cdots\otimes F^*/F^{*p} \]
by the subgroup generated by
$[a_1]\otimes\cdots\otimes [a_n]$ where $a_1$ is a norm in the
extension $F(\root{p}\of{a_j})/F$ of degree $p$, for
some $j = 2,\dots,n$. (If $F(\root{p}\of{a_j})/F$ is not an extension
of degree $p$, then $[a_j]=[1]$ and $[a_1]\otimes \dots \otimes
[a_n]=0$.)

Indeed let $l_nF$ be this factor group.  Because all relations in
$l_nF$ are valid in $k_nF$, we see that $k_nF$ is a naturally
homomorphic image of $l_nF$.  (See \cite[page~303, Exercise~6]{f-v}.)
In order to show that this homomorphism
is actually an isomorphism, it is enough to show that the Steinberg
relations which generate the relations in $k_nF$ are valid in $l_nF$
also.  Let $a_1,\dots,a_n\in F^*$. We denote as $\langle a_1,\dots,a_n
\rangle$ the image of $[a_1]\otimes\dots\otimes [a_n], [a_i]\in
F^*/F^{*p}$ in $l_nF$.

We want to show that if $1=a_i + a_j, 1\le i<j\le n$, then
\[
\langle a_1,\dots,a_n\rangle = 0 \mbox{ in }l_nF.
\]
Because this is trivially true if $a_j\in F^{*p}$ we shall
assume that $a_j\notin F^{*p}$. If $i=1$ then this is true
by our definition of the relations in $l_nF$ as
\begin{align*}
1-a_j &= \prod^{p-1}_{i=0}(1-\zeta_p^i \root{p}\of{a_j})\\
      &= N_{F(\root{p}\of{a_j})/F}(1-\root{p}\of{a_j}).
\end{align*}

Hence it is enough to show that if
\[
1 < i < j \le n
\]
then
\[
\langle a_1,\dots,a_i,\dots,a_j,\dots,a_n \rangle =
-\langle a_i,\dots,a_1,\dots,a_j,\dots,a_n \rangle.
\]
However using the equation
\[
-a = (1-a)/(1-a^{-1}) \mbox{ for }a \ne 1,
\]
we see that
\[
\langle a,\dots,-a,\dots \rangle = 0 \mbox{ in }l_nF
\mbox{ for all }a\in F^*.
\]
Hence
\begin{align*}
  0 &= \langle a_1 a_i,\dots,-a_1 a_i,\dots a_n \rangle \\
    &= \langle a_1,\dots,-a_1,\dots \rangle + \langle
    a_1,\dots,a_i,\dots \rangle + \\ & \langle a_i,\dots,
    a_1,\dots \rangle + \langle a_i,\dots,-a_i,\dots \rangle \\
    &= \langle a_1,\dots,a_i,\dots \rangle + \langle a_i,\dots,
    a_1,\dots \rangle,
\end{align*}
as required.

Hence $l_nF = k_nF$.

\section{The proof of Theorem~\ref{T1}}\label{S2}

We divide our proof into several lemmas.  Lemma~\ref{L2.3}
shows that it is enough to prove that $\inf:H^2(S^{[3]})
\to H^2(S/R^{(2,S)})$ is surjective.  This is proved by
considering certain spectral sequences below.

\begin{lemma}\label{L2.1}
$R^{(n,S)}\subset R\cap S^{(n+1)}$.
\end{lemma}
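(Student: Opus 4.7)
The plan is to prove this by straightforward induction on $n$, using only the definitions of the two filtrations and the already-noted fact that $R \subset S^{(2)}$ (which follows since $S$ and $G$ have minimal generating sets of the same cardinality, so $\pi$ induces an isomorphism $S/S^{(2)} \to G/G^{(2)}$, forcing $R \subset S^{(2)}$).

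For the base case $n = 1$, the assertion $R^{(1,S)} = R \subset R \cap S^{(2)}$ is immediate from $R \subset S^{(2)}$.

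For the inductive step, assume $R^{(n,S)} \subset R \cap S^{(n+1)}$. I must verify two containments for $R^{(n+1,S)} = (R^{(n,S)})^p [R^{(n,S)}, S]$. First, $R^{(n+1,S)} \subset R$: since $R$ is a closed normal subgroup of $S$, it is closed under $p$-th powers and under commutators with elements of $S$, so the inductive hypothesis $R^{(n,S)} \subset R$ gives $(R^{(n,S)})^p \subset R$ and $[R^{(n,S)}, S] \subset [R,S] \subset R$. Second, $R^{(n+1,S)} \subset S^{(n+2)}$: from $R^{(n,S)} \subset S^{(n+1)}$ we get $(R^{(n,S)})^p \subset (S^{(n+1)})^p \subset S^{(n+2)}$ and $[R^{(n,S)}, S] \subset [S^{(n+1)}, S] \subset S^{(n+2)}$, both by the definition of the filtration $S^{(n+2)} = (S^{(n+1)})^p [S^{(n+1)}, S]$. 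Combining these gives $R^{(n+1,S)} \subset R \cap S^{(n+2)}$, closing the induction.

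There is no real obstacle here: the lemma is a purely formal consequence of the recursive definitions and of $R \subset S^{(2)}$. The only mild point of care is remembering that in the pro-$p$ setting all subgroups involved are closed, so that ``generated by'' means ``topologically generated by'' and the containments above hold on the nose rather than only up to closure; but this is built into the conventions already declared in Section~\ref{S1}.
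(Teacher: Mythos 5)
Your proof is correct and follows essentially the same route as the paper's: induction on $n$, with base case $R\subset S^{(2)}$ and the inductive step $(R^{(n,S)})^p[R^{(n,S)},S]\subset (S^{(n+1)})^p[S^{(n+1)},S]=S^{(n+2)}$. You simply spell out the containment in $R$ more explicitly than the paper does.
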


\begin{proof}
Our inclusion is true if $n = 1$.  Assume that it is true for $k\leq n$.
Then $R^{(n+1,S)} = (R^{(n,S)})^p[R^{(n,S)},S]\subset
(S^{(n+1)})^p[S^{(n+1)},S] = S^{(n+2)}$.  So $R^{(n+1,S)}\subset R\cap
S^{(n+2)}$.
\end{proof}

Lemma \ref{L2.2} below was observed in \cite[page 102]{wur} under
an additional hypothesis, and in
\cite[page 57]{mi-sp2} for the case $p=2$. This lemma was also
generalized in \cite[page 207]{g-m}
in the case $p=2$ and in \cite{ch-e-m} for all $p$.

\begin{lemma}\label{L2.2}
If $h_2$ is surjective then $R^p[R,S] = R^{(2,S)} = R\cap
S^{(3)}$.
\end{lemma}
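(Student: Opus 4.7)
By Lemma~\ref{L2.1} the inclusion $R^{(2,S)}\subseteq R\cap S^{(3)}$ is automatic, so the content is the reverse inclusion. Equivalently, I want to show that the natural $\F_p$-linear map
\begin{align*}
\varphi : R/R^{(2,S)} \longrightarrow S^{(2)}/S^{(3)},
\end{align*}
induced by the inclusion $R\hookrightarrow S^{(2)}$, is injective (its kernel is exactly $(R\cap S^{(3)})/R^{(2,S)}$). The plan is to identify the $\F_p$-linear dual $\varphi^{\vee}$ with the inflation map $H^2(G^{[2]})\to H^2(G)$, and then to observe that surjectivity of $h_2$ is precisely what makes that inflation surjective.

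Since $S$ is free pro-$p$, $H^2(S,\F_p)=0$; the five-term LHS sequence for $1\to R\to S\to G\to 1$ becomes
\begin{align*}
0\to H^1(G)\to H^1(S)\to H^1(R)^{G}\xrightarrow{d_2} H^2(G)\to 0,
\end{align*}
and because $\pi:S\to G$ sends a minimal generating set to a minimal generating set, inflation is an isomorphism on $H^1$, forcing $d_2$ to be an isomorphism. A direct check identifies $H^1(R)^G = \mathrm{Hom}(R/R^{(2,S)},\F_p)$ (being $G$-invariant kills $[R,S]$, and $\F_p$-valuedness kills $R^p$), yielding the duality $R/R^{(2,S)}\cong H^2(G)^{\vee}$. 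The very same argument applied to $1\to S^{(2)}\to S\to S^{[2]}\to 1$, with $S^{(2)}$ playing the role of $R$, gives $S^{(2)}/S^{(3)}\cong H^2(S^{[2]})^{\vee}$. Applying naturality of the transgression to the morphism of extensions given by $R\hookrightarrow S^{(2)}$, $\mathrm{id}_S$, and $G\twoheadrightarrow G^{[2]}=S^{[2]}$ (the identification $G^{[2]}=S^{[2]}$ coming from $R\subseteq S^{(2)}$) identifies $\varphi^{\vee}$ with the inflation $\inf:H^2(G^{[2]})\to H^2(G)$.

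The image of $\inf:H^2(G^{[2]})\to H^2(G)$ is the $\F_p$-span of the cup products $\chi_i\cup\chi_j$ together with the Bocksteins $\beta\chi_i$; since $\zeta_p\in F$, every Bockstein is itself a cup product (with the Kummer class of $\zeta_p$, or of $-1$ when $p=2$), so $\im(\inf)=\im(h_2)$. Thus $h_2$ surjective is equivalent to $\inf$ surjective, which is equivalent to $\varphi^{\vee}$ surjective, which is equivalent to $\varphi$ injective; any $r\in R\cap S^{(3)}$ satisfies $\varphi([r])=0$ and hence $[r]=0$ in $R/R^{(2,S)}$, giving the lemma. The \textbf{main obstacle} will be verifying the identification $\varphi^{\vee}=\inf$: a diagram chase between the two LHS five-term sequences along the morphism of extensions above, routine once the duality conventions are fixed.
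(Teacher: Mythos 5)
Your proof is correct. Note that the paper itself does not prove Lemma~\ref{L2.2}: it only cites \cite{wur}, \cite{mi-sp2}, \cite{g-m} and \cite{ch-e-m}, where essentially this argument appears. Your route --- dualizing $\varphi\colon R/R^{(2,S)}\to S^{(2)}/S^{(3)}$ into the inflation $H^2(S^{[2]})\to H^2(G)$ by comparing the two five-term sequences, and then using $\zeta_p\in F$ to write Bocksteins as cup products so that $\im(\inf)=\im(h_2)$ --- is exactly the technique the paper deploys one degree higher in Lemma~\ref{L2.3} and the proof of Theorem~\ref{T1}, so it is the intended argument.
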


We consider the following pair of extensions
\begeq\label{CD2.3}\begin{CD}
1 @>>> R^{(2,S)} @>>> S @>>> S/R^{(2,S)} @>>> 1 \\
@. @VVV @VVV @VVV \\
1 @>>> S^{(3)} @>>> S @>>> S^{[3]} @>>> 1\; .
\end{CD}
\end{equation}

By applying the five-term exact sequence to \eqref{CD2.3}, we obtain a
commutative diagram
\begeq\label{CD2.4}\begin{CD}
H^1(R^{(2,S)})^{S/R^{(2,S)}} @>{\cong}>> H^2(S/R^{(2,S)}) \\
@AA\res A @AA\inf A \\
H^1(S^{(3)})^{S^{[3]}} @>{\cong}>> H^2(S^{[3]})\; .
\end{CD}
\end{equation}

The surjectivities of both isomorphisms follow from the fact that
$H^2(S) = 0$, because $S$ is a free pro-$p$-group.  The injectivities
follow from the fact that $H^1(S/R^{(2,S)})\cong H^1(S)$ and
$H^1(S^{[3]})\cong H^1(S)$, since both $R^{(2,S)}$ and $S^{(3)}$ are
normal subgroups of $S^{(2)}$.

Observe that $H^1(S^{(3)}/S^{(4)})\cong H^1(S^{(3)})^{S^{[3]}}$
and $H^1(R^{(2,S)}/R^{(3,S)})\cong H^1(R^{(2,S)})^{S/R^{(2,S)}}$.
The restriction $H^1(S^{(3)}/S^{(4)})\to H^1(R^{(2,S)}/R^{(3,S)})$
is given by the composite
\begin{align}
&H^1(S^{(3)}/S^{(4)})\overset{\res}{\longrightarrow}H^1(R^{(2,S)}S^{(4)}/S^{(4)})\overset{\cong}{\longrightarrow}H^1(R^{(2,S)}/R^{(2,S)}\cap S^{(4)}) \notag \\
&\overset{\inf}{\longrightarrow}H^1(R^{(2,S)}/R^{(3,S)})\; .
\notag
\end{align}
The restriction $H^1(S^{(3)}/S^{(4)})\to
H^1(R^{(2,S)}S^{(4)}/S^{(4)})$ is surjective.  If $R^{(2,S)} =
R\cap S^{(3)}$, then $R^{(2,S)}\cap S^{(4)} = (R\cap S^{(4)}) =
R\cap S^{(4)}$. Then $R^{(3,S)} = R\cap S^{(4)}$ iff $R^{(3,S)} =
R^{(2,S)}\cap S^{(4)}$. Also the last inflation map is surjective
iff $R^{(2,S)} \cap S^{(4)} = R^{(3,S)}$. Thus the last map $\inf:
H^1(R^{(2,S)}/R^{(2,S)}\cap S^{(4)} \to H^1(R^{(2,S)}/R^{(3,S)})$
is surjective iff $R^{(3,S)} = R \cap S^{(4)}$.
Therefore we obtain

\begin{lemma}\label{L2.3}%Jano wants it called "Lemma 3.3"
Assume that $R^{(2,S)} = R\cap S^{(3)}$.  The following are equivalent
    \begin{enumerate}
        \item $R^{(3,S)} = R\cap S^{(4)}$.
        \item $R^{(3,S)} = R^{(2,S)}\cap S^{(4)}$.
        \item $\inf:H^1(R^{(2,S)}/R^{(2,S)}\cap S^{(4)})\to H^1(R^{(2,S)}/R^{(3,S)})$ is surjective.
        \item $\res:H^1(S^{(3)}/S^{(4)})\to H^1(R^{(2,S)}/R^{(3,S)})$ is surjective.
        \item $\inf:H^2(S^{[3]})\to H^2(S/R^{(2,S)})$ is surjective.
    \end{enumerate}
\end{lemma}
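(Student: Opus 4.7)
The plan is to establish the cycle (1)$\Leftrightarrow$(2)$\Leftrightarrow$(3)$\Leftrightarrow$(4)$\Leftrightarrow$(5). Most of the scaffolding is already in place in the prose preceding the lemma --- the commutative diagram \eqref{CD2.4}, the factorization of $\res$, and the identifications of the various $H^1$-groups --- so the task reduces to careful bookkeeping about $\mathbb{F}_p$-vector spaces and their duals.

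The equivalence (1)$\Leftrightarrow$(2) is immediate: since $S^{(4)}\subset S^{(3)}$, the hypothesis yields $R^{(2,S)}\cap S^{(4)}=(R\cap S^{(3)})\cap S^{(4)}=R\cap S^{(4)}$, so the right-hand sides of (1) and (2) literally coincide. For (2)$\Leftrightarrow$(3), note that $R^{(2,S)}/R^{(3,S)}$ is elementary abelian of exponent $p$ by the very definition of $R^{(3,S)}=(R^{(2,S)})^p[R^{(2,S)},S]$, and Lemma \ref{L2.1} gives $R^{(3,S)}\subset R^{(2,S)}\cap S^{(4)}$. The inflation in (3) is then $\mathbb{F}_p$-dual to the natural quotient map $R^{(2,S)}/R^{(3,S)}\twoheadrightarrow R^{(2,S)}/(R^{(2,S)}\cap S^{(4)})$, and surjects iff that quotient has trivial kernel, i.e.\ iff $R^{(3,S)}=R^{(2,S)}\cap S^{(4)}$.

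For (3)$\Leftrightarrow$(4), I would invoke the factorization of $\res$ displayed just before the lemma. The first factor $H^1(S^{(3)}/S^{(4)})\to H^1(R^{(2,S)}S^{(4)}/S^{(4)})$ is restriction of $\mathbb{F}_p$-linear functionals from the elementary abelian $p$-group $S^{(3)}/S^{(4)}$ to its subgroup $R^{(2,S)}S^{(4)}/S^{(4)}$, hence automatically surjective; the middle factor is the isomorphism $H^1(R^{(2,S)}S^{(4)}/S^{(4)})\cong H^1(R^{(2,S)}/(R^{(2,S)}\cap S^{(4)}))$; so the composite surjects iff its last factor --- the inflation of (3) --- surjects.

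Finally, (4)$\Leftrightarrow$(5) is read off directly from diagram \eqref{CD2.4}. Both horizontal maps are isomorphisms (the authors have recorded that this follows from $H^2(S)=0$ via the five-term sequence applied to the free pro-$p$-group $S$), so the left vertical $\res$ and the right vertical $\inf$ share the same surjectivity status. Under the identifications $H^1(S^{(3)})^{S^{[3]}}\cong H^1(S^{(3)}/S^{(4)})$ and $H^1(R^{(2,S)})^{S/R^{(2,S)}}\cong H^1(R^{(2,S)}/R^{(3,S)})$, the left vertical is the restriction of (4) and the right vertical is the inflation of (5). I anticipate no real obstacle: the entire lemma is an exercise in replacing $H^1$ by $\mathrm{Hom}(-,\mathbb{F}_p)$ on elementary abelian $p$-groups and reading off the linear-algebra consequences, with the nontrivial work already absorbed into the setup preceding the statement.
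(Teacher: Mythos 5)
Your proposal is correct and follows essentially the same route as the paper: the authors prove the lemma in the prose preceding its statement, via exactly the chain (1)$\Leftrightarrow$(2) from the hypothesis $R^{(2,S)}=R\cap S^{(3)}$, (2)$\Leftrightarrow$(3) from surjectivity of the inflation, (3)$\Leftrightarrow$(4) from the displayed three-step factorization of $\res$ with its first two factors surjective, and (4)$\Leftrightarrow$(5) from diagram \eqref{CD2.4}. Your added bookkeeping (identifying the $H^1$'s as $\mathbb{F}_p$-duals of elementary abelian quotients and noting $R^{(3,S)}\subset R^{(2,S)}\cap S^{(4)}$ via Lemma \ref{L2.1}) only makes explicit what the paper leaves implicit.
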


We have a pair of extensions
\begeq\label{CD2.6}\begin{CD}
1 @>>> R/R^{(2,S)} @>>> S/R^{(2,S)} @>>> S/R = G @>>> 1 \\
@. @VVV @VVV @VVV \\
1 @>>> S^{(2)}/S^{(3)} @>>> S^{[3]} @>>> S^{[2]} @>>> 1\; .
\end{CD}
\end{equation}
To prove (5) in Lemma~\ref{L2.3} we can compare the LHS spectral
sequences corresponding to \eqref{CD2.6}.  We set
$E^{p,q}_r(S^{[3]})$ (or $E^{p,q}_r(S/R^{(2,S)})$) the $E^{p,q}_r
-$term corresponding to the bottom extension (or the top
extension).

Observe that $H^1(R/R^{(2,S)})\cong H^1(R)^G$ and $H^1(R)^G\cong
H^2(G)$, the last isomorphism follows from the five-term exact
sequence corresponding to the extension $1\to R\to S\to G\to 1$.
Then $H^1(R/R^{(2,S)})^G = H^1(R/R^{(2,S)})\cong H^2(G)$.
Similarly we have $H^1(S^{(2)}/S^{(3)})\cong H^2(S^{[2]})$.
Therefore the transgression maps $d_2^{0,1}$ in both spectral
sequences are surjective. Thus we obtain

\begin{lemma}\label{L2.7}%Jano wants "Lemma 3.4"
$E^{2,0}_\infty(S^{[3]}) = E^{2,0}_\infty(S/R^{(2,S)}) = 0$.
\end{lemma}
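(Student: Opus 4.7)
The plan is to deduce the lemma directly from the observations made in the paragraph preceding its statement. For each of the two extensions in \eqref{CD2.6}, I would first argue that $E^{2,0}_\infty = E^{2,0}_3$: in a first-quadrant Lyndon--Hochschild--Serre spectral sequence, every differential leaving $E^{2,0}_r$ for $r\ge 2$ targets $E^{r+2,1-r}_r = 0$, so $E^{2,0}_\infty$ is simply the cokernel of the transgression $d_2^{0,1}\colon E^{0,1}_2\to E^{2,0}_2$. Consequently it suffices to prove that each of these two transgressions is surjective.

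Next I would unpack the $E^{0,1}_2$ terms. For the top extension, since $R^{(2,S)} = R^p[R,S]$, conjugation by $S$ acts trivially on $R/R^{(2,S)}$ (because $srs^{-1}r^{-1} = [s,r]\in [S,R]\subset R^{(2,S)}$), and likewise $S$ acts trivially on $S^{(2)}/S^{(3)}$. Hence
\[
E^{0,1}_2(S/R^{(2,S)}) = H^1(R/R^{(2,S)}),\qquad E^{0,1}_2(S^{[3]}) = H^1(S^{(2)}/S^{(3)}).
\]
A dual-coinvariants argument then identifies $R/R^{(2,S)}$ with the largest quotient of $R^{\mathrm{ab}}/p$ on which $G$ acts trivially, so $H^1(R/R^{(2,S)}) \cong H^1(R)^G$; an identical computation gives $H^1(S^{(2)}/S^{(3)})\cong H^1(S^{(2)})^{S^{[2]}}$.

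Now I would invoke the five-term exact sequence applied to the two free presentations $1\to R\to S\to G\to 1$ and $1\to S^{(2)}\to S\to S^{[2]}\to 1$. Because $S$ is a free pro-$p$-group, $H^2(S)=0$, so the transgressions
\[
H^1(R)^G \lra H^2(G),\qquad H^1(S^{(2)})^{S^{[2]}}\lra H^2(S^{[2]})
\]
are surjective (in fact isomorphisms, since the preceding inflations $H^1(G)\to H^1(S)$ and $H^1(S^{[2]})\to H^1(S)$ are isomorphisms by the freeness of $S$ and the fact that $R,S^{(2)}\subset S^{(2)}$). By the naturality of the LHS spectral sequence with respect to the vertical quotient maps in \eqref{CD2.6}, these transgressions agree with the $d_2^{0,1}$ of \eqref{CD2.6} under the identifications above, so those $d_2^{0,1}$ are surjective as well. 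This forces $E^{2,0}_3 = 0$, hence $E^{2,0}_\infty=0$, in both spectral sequences.

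The only mildly delicate point is the last step, namely checking that the transgression on $H^1(R)^G$ coming from the short exact sequence $1\to R\to S\to G\to 1$ really corresponds, under the identification $H^1(R/R^{(2,S)})\cong H^1(R)^G$, to the transgression of the top row of \eqref{CD2.6}; but this is immediate from the naturality of the five-term sequence with respect to the map of extensions induced by $R\twoheadrightarrow R/R^{(2,S)}$, and the analogous observation handles the bottom row. No computation beyond what has already been assembled is required.
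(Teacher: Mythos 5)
Your argument is correct and follows essentially the same route as the paper: identify $E^{0,1}_2$ of each extension in \eqref{CD2.6} with $H^1(R)^G$ (resp.\ $H^1(S^{(2)})^{S^{[2]}}$) via the triviality of the conjugation action, and then use the five-term sequence of the free presentation together with $H^2(S)=0$ to conclude that the transgression $d^{0,1}_2$ is surjective onto $E^{2,0}_2$, killing $E^{2,0}_3=E^{2,0}_\infty$. You simply spell out the naturality and the degeneration $E^{2,0}_\infty=E^{2,0}_3$ more explicitly than the paper does; no discrepancy.
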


To prove the surjectivity of $\inf:H^2(S^{[3]})\to
H^2(S/R^{(2,S)})$, it is enough to show the surjectivities of the
homomorphism corresponding to $E^{1,1}_\infty$-terms and the
homomorphism corresponding to $E^{0,2}_\infty$-terms.

Assume that $R^{(2,S)} = R\cap S^{(3)}$.  Observe that
\[ H^1(R/R^{(2,S)}) = H^1(R/R\cap S^{(3)})\cong H^1(RS^{(3)}/S^{(3)})\;
. \]
Thus we obtain

\begin{lemma}\label{L2.8}%Jano wants "Lemma 3.5"
$\res\colon H^1(S^{(2)}/S^{(3)})\to H^1(R/R^{(2,S)})$ is surjective.
\end{lemma}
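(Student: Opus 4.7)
The plan is to identify $R/R^{(2,S)}$ with a closed subgroup of the elementary abelian pro-$p$-group $S^{(2)}/S^{(3)}$, and then invoke the fact that $\Fp$-valued continuous characters on a closed subspace of a topological $\Fp$-vector space always extend to the whole space.

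First, I would note that $R\subset S^{(2)}$, so the inclusion induces a homomorphism $R/R^{(2,S)}\to S^{(2)}/S^{(3)}$ whose kernel is $(R\cap S^{(3)})/R^{(2,S)}$. Under the standing hypothesis $R^{(2,S)} = R\cap S^{(3)}$ this kernel is trivial and the image is $RS^{(3)}/S^{(3)}$; this reproduces the identification $R/R^{(2,S)}\cong RS^{(3)}/S^{(3)}$ already displayed just before the statement of the lemma.

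Next, I would observe that $S^{(2)}/S^{(3)}$ is elementary abelian pro-$p$: since $S^{(3)} = (S^{(2)})^p[S^{(2)},S]$ contains both $(S^{(2)})^p$ and $[S^{(2)},S^{(2)}]$, the quotient is abelian of exponent $p$, hence a topological $\Fp$-vector space. A pro-$p$ elementary abelian group is topologically a product of copies of $\Fp$, so every closed subgroup is a topological direct summand. Consequently the restriction map
\[
H^1(S^{(2)}/S^{(3)}) = \mathrm{Hom}_{\mathrm{cont}}(S^{(2)}/S^{(3)},\Fp) \longrightarrow \mathrm{Hom}_{\mathrm{cont}}(RS^{(3)}/S^{(3)},\Fp) = H^1(R/R^{(2,S)})
\]
is surjective, which is the claim.

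There is really no substantive obstacle here: the only point that needs any care is the topological extension property for $\Fp$-valued characters, but this is a standard consequence of the structure of pro-$p$ elementary abelian groups as topological $\Fp$-vector spaces, so the argument is essentially formal once the hypothesis $R^{(2,S)}=R\cap S^{(3)}$ is invoked.
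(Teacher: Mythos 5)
Your proposal is correct and follows essentially the same route as the paper: the paper's argument consists precisely of the identification $H^1(R/R^{(2,S)}) = H^1(R/R\cap S^{(3)})\cong H^1(RS^{(3)}/S^{(3)})$ under the standing hypothesis $R^{(2,S)}=R\cap S^{(3)}$, followed by the (implicit) fact that restriction to a closed subgroup of the elementary abelian pro-$p$-group $S^{(2)}/S^{(3)}$ is surjective on $H^1$. You have merely made explicit the character-extension step that the paper leaves to the reader.
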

Since both extensions in \ref{CD2.6} are central extensions, then
$E^{0,2}_2(S^{[3]})\cong H^2(S^{(2)}/S^{(3)})$ and
$E^{0,2}_2(S/R^{(2,S)})\cong H^2(R/R^{(2,S)})$.  Let $\{z_c\}_{c\in C}$
be a basis for $H^1(R/R^{(2,S)})$ and $\{w_d\}_{d\in D}$ be a basis for
$H^1(S^{(2)}/S^{(3)})$. Let $\beta w_d$ and $\beta z_c$ be Bocksteins
of $w_d$ and $z_c$ in $H^2(R/R^{(2,S)})=E_3^{0,2}(S/R^{(2,S)})$ and
$H^2(S^{(2)}/S^{(3)})=E_3^{0,2}(S^{[3]})$ respectively. Then
$\{\beta w_d\}_{d\in D}$ and $\{\beta z_c\}_{c\in C}$ generate
$E_3^{0,2}(S/R^{(2,S)})$ and $E_3^{0,2}(S^{[3]})$ respectively.
This follows from a standard argument exploiting the fact that
$d_2$ is a derivation
with respect to the multiplicative structure of $E_2^{p,q}$ and
the fact that $d_2^{0,1}$ is injective.  However using the
fact that $d_2^{1,1}$ is surjective, we
see that $d_3^{0,2}(S^{[3]})=0$.  Hence $E_3^{0,2}(S^{[3]})=
E_\infty^{0,2}(S^{[3]})$.  In Lemma~3.9 we denote the natural map
$E^{0,2}_3(S^{[3]})\to E^{0,2}_3(S/R^{(2,S)})$ as $\res$ because
it is induced by the restriction map
$S^{(2)}/S^{(3)}\lra R/R^{(2,S)}$.
By Lemma \ref{L2.8}, and our discussion above, we obtain

\begin{lemma}\label{L2.9}
$\res\colon E^{0,2}_3(S^{[3]})\to E^{0,2}_3(S/R^{(2,S)})$ is surjective
and $E_3^{0,2}(S^{[3]})=E_\infty^{0,2}(S^{[3]})$.
\end{lemma}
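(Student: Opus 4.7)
My plan is to handle the two assertions of Lemma~\ref{L2.9} separately, leaning on the preceding discussion to avoid redoing the standard computations it contains.

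For the surjectivity of $\res\colon E^{0,2}_3(S^{[3]})\to E^{0,2}_3(S/R^{(2,S)})$, the paragraph before the lemma already establishes that $E^{0,2}_3(S^{[3]})$ is generated by the Bockstein classes $\{\beta w_d\}_{d\in D}$ and $E^{0,2}_3(S/R^{(2,S)})$ is generated by $\{\beta z_c\}_{c\in C}$, so it is enough to show that each $\beta z_c$ lies in the image of $\res$. First I would invoke Lemma~\ref{L2.8} to write each $z_c$ as an $\F_p$-linear combination of the restrictions $\res(w_d)$. Then, by naturality of the Bockstein (arising from the short exact sequence $0\to\F_p\to\Z/p^2\Z\to\F_p\to 0$) one has $\res\circ\beta=\beta\circ\res$; applying $\beta$ termwise to these expressions shows each $\beta z_c$ lies in the image of $\res$ on $E^{0,2}_3$, which settles the first claim.

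For the identification $E^{0,2}_3(S^{[3]})=E^{0,2}_\infty(S^{[3]})$, the only possibly nonzero outgoing differential from $E^{0,2}_r$ is $d_3^{0,2}\colon E_3^{0,2}\to E_3^{3,0}$, since for $r\geq 4$ the target has negative $q$-degree. It thus suffices to verify $E_3^{3,0}(S^{[3]})=0$, equivalently that $d_2^{1,1}\colon E_2^{1,1}(S^{[3]})\to E_2^{3,0}(S^{[3]})$ is surjective. This last surjectivity is the step I expect to be the main technical input.

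To establish it, I would use that the bottom extension in \eqref{CD2.6} is central with elementary abelian kernel, so $E_2^{p,q}(S^{[3]})\cong H^p(S^{[2]})\otimes H^q(S^{(2)}/S^{(3)})$, and the derivation property of $d_2$ (together with the vanishing of $d_2$ on $E_2^{1,0}$) yields $d_2^{1,1}(x\otimes w)=\pm\, x\cup d_2^{0,1}(w)$. By Lemma~\ref{L2.7} the transgression $d_2^{0,1}$ is surjective onto $H^2(S^{[2]})$, so the image of $d_2^{1,1}$ coincides with $H^1(S^{[2]})\cup H^2(S^{[2]})$. Since $S^{[2]}$ is elementary abelian, its mod-$p$ cohomology ring is known to be generated in degrees $1$ and $2$, and a direct check on a standard monomial basis of $H^3(S^{[2]})$ shows that $H^1(S^{[2]})\cup H^2(S^{[2]}) = H^3(S^{[2]})$, giving the required surjectivity of $d_2^{1,1}$ and completing the plan.
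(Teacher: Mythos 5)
Your proposal is correct and follows essentially the same route as the paper: surjectivity of $\res$ on $E_3^{0,2}$ comes from the Bockstein generation of both groups together with Lemma~\ref{L2.8} and naturality of the Bockstein, while degeneration at $E_3$ comes from the surjectivity of $d_2^{1,1}$, which the paper likewise reduces (via the identification of $d_2^{1,1}$ with the cup product composed with the transgression isomorphism) to the surjectivity of $\cup\colon H^1(S^{[2]})\otimes H^2(S^{[2]})\to H^3(S^{[2]})$ for the elementary abelian group $S^{[2]}$. The only difference is that you spell out this last cup-product surjectivity explicitly, whereas the paper records it implicitly in the exact bottom row of diagram~\eqref{CD2.13}.
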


We have a commutative diagram

\[ \begin{CD}
E^{0,2}_\infty (S/R^{(2,S)}) @>>> E^{0,2}_3(S/R^{(2,S)}) \\
@AAA @AAA \\
E^{0,2}_\infty (S^{[3]}) @= E^{0,2}_3(S^{[3]})\; .
\end{CD} \]

Using Lemma~\ref{L2.9} we deduce the following corollaries.

\begin{corollary}\label{C2.11}
$\res:E^{0,2}_\infty(S^{[3]})\to E^{0,2}_\infty(S/R^{(2,S)})$ is
surjective.
\end{corollary}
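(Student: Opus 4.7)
The plan is to obtain the corollary as a short diagram chase on the commutative square displayed just before the statement, using Lemma~\ref{L2.9} as the only substantive input.

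First I would pin down the nature of the horizontal arrows in that square. In either Lyndon--Hochschild--Serre spectral sequence, the $(0,2)$-position only has differentials leaving it (those entering $(0,2)$ would come from $E_r^{-r,r+1}$, which vanishes in a first-quadrant spectral sequence), so $E^{0,2}_\infty$ is literally a subgroup of $E^{0,2}_3$ and the horizontal arrows are the corresponding inclusions. Moreover the bottom inclusion is an equality, since Lemma~\ref{L2.9} asserts $E^{0,2}_3(S^{[3]}) = E^{0,2}_\infty(S^{[3]})$.

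Given this, the chase is routine. I would take an arbitrary class $\alpha \in E^{0,2}_\infty(S/R^{(2,S)})$ and regard it as an element of $E^{0,2}_3(S/R^{(2,S)})$ via the top inclusion. By the surjectivity half of Lemma~\ref{L2.9}, one can choose a lift $\beta \in E^{0,2}_3(S^{[3]}) = E^{0,2}_\infty(S^{[3]})$ with $\res(\beta) = \alpha$. Since $\beta$ already lies in $E^{0,2}_\infty(S^{[3]})$, the left vertical arrow sends $\beta$ to some class $\gamma \in E^{0,2}_\infty(S/R^{(2,S)})$. Commutativity of the square together with the injectivity of the top inclusion then forces $\gamma = \alpha$, which is exactly the surjectivity we want.

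The hard work has already been absorbed into Lemma~\ref{L2.9} (identifying the $E_3^{0,2}$-terms with the appropriate $H^2$'s, generating them by Bocksteins, and using the surjectivity of $d_2^{1,1}$ to see that $d_3^{0,2}$ vanishes on $S^{[3]}$). The only subtlety in the present step is confirming that the horizontal maps really are inclusions rather than more general subquotient projections; as noted above this is immediate from the first-quadrant structure at the $(0,2)$-entry, so no genuine obstacle remains.
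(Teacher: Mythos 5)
Your argument is correct and is essentially the paper's own: the paper displays exactly this commutative square (with the horizontal maps being the inclusions $E^{0,2}_\infty\subseteq E^{0,2}_3$ forced by the first-quadrant structure at the $(0,2)$-entry, the bottom one an equality by Lemma~\ref{L2.9}) and then states that the corollary follows from Lemma~\ref{L2.9}, which is precisely the diagram chase you carry out. You have merely written out the details the paper leaves implicit.
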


\begin{corollary}\label{C2.12}
$E^{0,2}_\infty(S/R^{(2,S)}) = E^{0,2}_3(S/R^{(2,S)})$.
\end{corollary}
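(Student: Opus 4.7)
The plan is a short diagram chase using the commutative square displayed immediately above Corollary~\ref{C2.12}, together with Lemma~\ref{L2.9}. The key general observation is that $E^{0,q}_\infty$ always injects into $E^{0,q}_3$ in a first-quadrant spectral sequence: since differentials $d_r^{0,q}$ only leave position $(0,q)$, one has $E^{0,q}_{r+1} = \ker(d_r^{0,q}) \subseteq E^{0,q}_r$. In our case $q=2$, the target $E_r^{r,3-r}$ of $d_r^{0,2}$ vanishes for $r\geq 4$, so $E^{0,2}_\infty(S/R^{(2,S)}) = \ker(d_3^{0,2}) \hookrightarrow E^{0,2}_3(S/R^{(2,S)})$, and this inclusion is exactly the top horizontal arrow of the square.

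For surjectivity, Lemma~\ref{L2.9} supplies two pieces of input: the right vertical map $E^{0,2}_3(S^{[3]}) \to E^{0,2}_3(S/R^{(2,S)})$ is surjective, and the bottom map is the identification $E^{0,2}_\infty(S^{[3]}) = E^{0,2}_3(S^{[3]})$. Given $x \in E^{0,2}_3(S/R^{(2,S)})$, choose a preimage $y \in E^{0,2}_3(S^{[3]}) = E^{0,2}_\infty(S^{[3]})$ along the right vertical. The left vertical, which is the natural map on $E_\infty$-terms induced by the morphism of extensions in \eqref{CD2.6}, sends $y$ into $E^{0,2}_\infty(S/R^{(2,S)})$; by commutativity of the square, the image of this element under the top arrow equals $x$. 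Together with the injectivity above, this forces $E^{0,2}_\infty(S/R^{(2,S)}) = E^{0,2}_3(S/R^{(2,S)})$.

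This argument is purely formal once Lemma~\ref{L2.9} is in hand, so there is no real obstacle. An equivalent and perhaps more transparent way to phrase it, should one wish to bypass the diagram chase, is to note that naturality of the spectral sequence gives $\res \circ d_3^{0,2}(S^{[3]}) = d_3^{0,2}(S/R^{(2,S)}) \circ \res$; since the left-hand side vanishes by Lemma~\ref{L2.9} and $\res$ on $E_3^{0,2}$-terms is surjective, $d_3^{0,2}(S/R^{(2,S)}) = 0$, and higher differentials out of $(0,2)$ already vanish for degree reasons. The only bookkeeping point is that the spectral-sequence comparison is compatible with the filtrations on $H^2$, which is precisely what underlies the commutative diagram cited.
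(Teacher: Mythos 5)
Your proof is correct and follows essentially the same route as the paper, which deduces Corollary~\ref{C2.12} from the displayed commutative square together with Lemma~\ref{L2.9} (surjectivity of the right vertical map and the identification $E^{0,2}_3(S^{[3]})=E^{0,2}_\infty(S^{[3]})$); your diagram chase, and the equivalent reformulation via $d_3^{0,2}(S/R^{(2,S)})=0$, is exactly the intended argument.
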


Since both extensions in the above are central, then
\[
d^{1,1}_2:H^1(G,H^1(R/R^{(2,S)}))\to H^3(G)
\]
is the composite $H^1(G,H^1(R/R^{(2,S)}))
\overset{(1,d^{0,1}_2)}{\underset{\cong}{\longrightarrow}}
H^1(G)\otimes H^2(G)\overset{\cup}{\longrightarrow} H^3(G)$.
Also $d^{1,1}_2:H^1(S^{[2]}, H^1(S^{(2)}/S^{(3)}))\to H^3(S^{[3]})$
is the composite
\[ H^1(S^{[2]}, H^1(S^{(2)}/S^{(3)}))
\overset{(1,d^{0,1}_2)}{\underset{\cong}{\longrightarrow}}
H^1(S^{[2]})\otimes H^2(S^{[2]})\overset{\cup}{\longrightarrow}
H^3(S^{[2]})\; . \] We may abuse notation by setting
$$
E^{1,1}_\infty (S/R^{(2,S)}) = \Ker(\cup : H^1(G)\otimes
H^2(G)\to H^3(G))
$$
and $E^{1,1}_\infty (S^{[3]}) = \Ker(\cup :H^2(S^{[2]})\otimes
H^2(S^{[2]})\to H^3(S^{[2]}))$. We have a commutative diagram
\begeq\label{CD2.13}\begin{CD}
0 @>>> E^{1,1}_\infty (S/R^{(2,S)}) @>>> H^1(G)\otimes H^2(G) @>\cup >> H^3(G) @>>> 0 \\
@. @AA\text{\it u}A @AA\text{\it v}A @AA\text{\it inf}A \\
0 @>>> E^{1,1}_\infty (S^{[3]}) @>>> H^1(S^{[2]})\otimes H^2(S^{[2]}) @>\cup >>
H^3(S^{[2]}) @>>> 0.
\end{CD}
\end{equation}
Here $u$ and $v$ are natural maps induced by inflation maps.
Recall that $S^{[2]}\cong G^{[2]}$ under our projection map
$S \to G$. In the next lemma we use both the injectivity of
$h_3$ and the surjectivity of $h_2$.

\begin{lemma}\label{L2.14}
The map $u\colon E^{1,1}_{\infty}(S^{[3]})\lra E^{1,1}_{\infty}(S/R^{(2,S)})$ is surjective.

\end{lemma}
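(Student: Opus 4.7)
The plan is to chase the commutative diagram \eqref{CD2.13} directly. Once I establish surjectivity of $v$ and $\inf$ and identify $\Ker v$, the proof reduces to showing that the cup product sends $\Ker v$ onto $\Ker(\inf\colon H^3(S^{[2]})\to H^3(G))$. The surjectivity of $v$ uses the surjectivity of $h_2$: since $S^{[2]}\cong G^{[2]}$ gives $H^1(S^{[2]})\cong H^1(G)$, and by Bloch--Kato every element of $H^2(G)$ is a sum of symbols $(a)(b)$ pulled back from $H^2(G^{[2]})$, the inflation $\inf\colon H^2(S^{[2]})\to H^2(G)$ is surjective. Setting $J:=\Ker(\inf\colon H^2(S^{[2]})\to H^2(G))$ and using exactness of the tensor product over $\Fp$, we obtain $\Ker v=H^1(G^{[2]})\otimes J$; surjectivity of $\inf$ on $H^3$ is handled analogously using surjectivity of $h_3$.

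The key structural step is the identity
\[
\Ker\bigl(\inf\colon H^3(S^{[2]})\to H^3(G)\bigr)\;=\;H^1(G^{[2]})\cup J,
\]
and this is precisely where the injectivity of $h_3$ enters. Given $z$ in the left-hand side, I first normalize modulo $H^1(G^{[2]})\cup J$: the identity $\beta(a)=(\zeta_p)(a)$ in $H^2(G)$ places $\beta(a)-(\zeta_p)(a)$ in $J$, so any Bockstein contribution $(y)\cup\beta(a)$ to $z$ (which can arise when $p$ is odd) is congruent to $(y)(\zeta_p)(a)$ mod $H^1(G^{[2]})\cup J$. After this reduction we may assume $z=\sum(a_i)(b_i)(c_i)$ in $H^3(G^{[2]})$. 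Its image $h_3\bigl(\sum\{a_i,b_i,c_i\}\bigr)$ in $H^3(G)$ vanishes, and injectivity of $h_3$ then forces $\sum\{a_i,b_i,c_i\}=0$ in $k_3F$. Hence $\sum[a_i]\otimes[b_i]\otimes[c_i]$ lies in the Steinberg submodule of $(F^*/F^{*p})^{\otimes 3}$ and is an $\Fp$-linear combination of Steinberg tensors $[x]\otimes[1-x]\otimes[y]$ and their permutations. Pushing this identity through the multiplicative map $T^{\ast}(F^*/F^{*p})\to H^{\ast}(G^{[2]})$, $[a]\mapsto(a)$, shows that $z$ is a sum of products of the form $(y)\cup((x)(1-x))$ (up to sign), each of which manifestly lies in $H^1(G^{[2]})\cup J$ since $(x)(1-x)\in J$.

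Given the structural identity, the proof concludes by direct chase: any $\bar y\in E^{1,1}_{\infty}(S/R^{(2,S)})$ lifts via $v$ to some $y_0\in H^1(S^{[2]})\otimes H^2(S^{[2]})$; commutativity yields $\inf(\cup y_0)=\cup v(y_0)=\cup\bar y=0$, so $\cup y_0$ lies in the kernel above, and we may pick $w\in\Ker v$ with $\cup w=\cup y_0$. Then $y_0-w\in\Ker(\cup)=E^{1,1}_{\infty}(S^{[3]})$ and $u(y_0-w)=v(y_0-w)=\bar y$, proving surjectivity of $u$. The main obstacle is the structural identity of the middle paragraph: it asserts that the relation ideal of $H^{\ast}(G)$, viewed as a quotient of $H^{\ast}(G^{[2]})$, is generated in degree $2$ (at least up to degree $3$). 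Its verification genuinely requires both Bloch--Kato ingredients flagged by the authors, namely surjective $h_2$ (to identify $\Ker v$) and injective $h_3$ (to exploit Steinberg in degree $3$), with extra care for Bockstein classes when $p$ is odd.
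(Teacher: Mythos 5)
Your proof is correct, but it takes a genuinely different route from the paper's. The paper works entirely on the bottom row of \eqref{CD2.13}: surjectivity of $h_2$ shows every element of $H^1(G)\otimes H^2(G)$ is a sum of decomposables $\alpha\otimes(\beta\cup\gamma)$; injectivity of $h_3$, via the presentation $l_3F=k_3F$ from the preliminaries, shows that $E^{1,1}_{\infty}(S/R^{(2,S)})=\Ker(\cup)$ is generated by elements $z_1\otimes(z_2\cup z_3)$ with $z_1\cup z_2=0$; and then an explicit preimage does the rest, namely $y_1\otimes(y_2\cup y_3)-y_3\otimes(y_1\cup y_2)$, which lies in $\Ker(\cup)$ upstairs unconditionally (by associativity and graded commutativity of the cup product) and maps under $u$ to $z_1\otimes(z_2\cup z_3)$ precisely because $z_1\cup z_2=0$. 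You instead run a diagram chase on \eqref{CD2.13}, which trades the problem of finding generators of the target for the structural identity $\Ker\bigl(\inf\colon H^3(S^{[2]})\to H^3(G)\bigr)=H^1(S^{[2]})\cup J$. Both arguments rest on the same two inputs ($h_2$ surjective, $h_3$ injective, filtered through the Steinberg presentation of $k_3F$), but yours additionally requires the ring structure of $H^*(S^{[2]})$ for an elementary abelian pro-$p$-group (to know that $H^3$ is spanned by triple products and classes $(y)\cup\beta(a)$) and the formula $\beta(a)=(\zeta_p)\cup(a)$ to absorb the Bockstein terms when $p$ is odd --- two standard facts that the paper's explicit-lift trick sidesteps entirely. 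In exchange, your route isolates a statement of independent interest: the kernel of inflation from the maximal elementary abelian quotient is generated in degree $2$, at least through degree $3$, which is essentially the degree-$3$ instance of the quadratic-algebra formulation of Bloch--Kato. Your chase itself is sound: $v$ is surjective since $\inf\colon H^1(S^{[2]})\to H^1(G)$ is an isomorphism and $\inf\colon H^2(S^{[2]})\to H^2(G)$ is onto by $h_2$, the identification $\Ker v=H^1(S^{[2]})\otimes J$ is exact over $\F_p$, and the final step $u(y_0-w)=v(y_0)-v(w)=\bar y$ is correct because $u$ is the restriction of $v$.
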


\begin{proof} We consider $E^{1,1}_{\infty}(S/R^{(2,S)})$ as
a subgroup of $H^1(G)\otimes H^2(G)$. Using the surjectivity
of $h_2$, we see that each element in $H^1(G)\otimes H^2(G)$
can be written as a sum of elements of the
form $\alpha\otimes(\beta\cup\gamma)$ where $\alpha,\beta,\gamma
\in H^1(G)$. As we remarked at the end of Section 2, the injectivity
of $h_3$ implies that $E^{1,1}_{\infty}(S/R^{(2,S)})$ is generated
by the elements $z_1\otimes (z_2\cup z_3)$ such that $z_1\cup z_2=0$
in $H^2(G)$. (Since $h_2$ is the isomorphism, we can work in
$H^1(G)$ rather than in $k_2F$.) Because the map $\inf\colon H^1
(S^{[2]})\lra H^1(G)$ is an isomorphism, we see that for the element
$z_1\otimes(z_2\cup z_3)$ as above we can find $y_i\in H^1(S^{[2]}),
i=1,2,3$ such that $\inf(y_i)=z_i$. Using our assumption that
$z_1\cup z_2=0$ in $H^2(G)$ we see that

$$u((y_1\otimes(y_2\cup y_3)-y_3\otimes(y_1\cup y_2))=z_1\otimes(z_2\cup z_3).$$

Therefore we see that our map $u$ is surjective.
\end{proof}

\section{Graded Lie Algebras}\label{S3}

Here we give an equivalent description of Question~1.2 in Lie
algebra language.  A convenient reference is Lazard's paper
\cite{laz1}. As usual, we consider a minimal presentation of $G$.

\begeq\label{CD3.2}\begin{CD}
1\longrightarrow R\longrightarrow S\longrightarrow
G\overset{\pi}\longrightarrow 1\; .
\end{CD}
\end{equation}

Let $S$ and $G$ admit the usual filtrations

\[ S^{(1)} = S,\dots ,S^{(n+1)} = (S^{(n)})^p[S^{(n)},S],\dots \]
\[ G^{(1)} = G,\dots ,G^{(n+1)} = (G^{(n)})^p[G^{(n)},G],\dots\; . \]

The formulae $G^{(n+1)}\subset G^{(n)},[G^{(n)},G^{(m)}]\subset
G^{(n+m)}$ imply that $\gr_n(G) = G^{(n)}/G^{(n+1)}$ (denoted
additively) is an vector space over $\F_p$ and that the graded
algebra $\gr(G) = \sum \gr_n(G)$ is an algebra over $\F_p$ where
multiplication of homogenous elements of $\gr(G)$ is induced by
the commutator operation.  This operation satisfies the Jacobi
identity and hence is a Lie bracket. The $p$-th power map in $G$
induces an operator $P$ on $\gr(G)$ making it a Lie algebra over
$\F_p[\pi]$ if $p\ne2$ and a mixed Lie algebra if $p=2$ (cf.
\cite{laz1}). Similarly, the induced filtration $\{R\cap
S^{(n)}\}$ yields the graded Lie algebra $\gr(R)_{ind}$. The
extension \ref{CD3.2} induces an exact sequence of (mixed) Lie
algebras
\[ 0\to \gr(R)_{ind}\to \gr(S)\overset{\phi}{\to} \gr(G)\lra 0. \]
Since the filtration of $G$ is discrete the map $\phi$ is
surjective. (See pages 428-430 in \cite{laz1} for details.)

On the other hand, the filtration $\{R^{(n,S)}\}$ yields another
Lie algebra $\gr(R,S)$.  The inclusion $R^{(n,S)}\subset R\cap
S^{(n+1)}$ (see Lemma \ref{L2.1} induces the homomorphism
$\iota_n:\gr_n(R,S)\to \gr_{n+1}(R)_{ind}$ and therefore a Lie
algebra homomorphism $\iota:\gr(R,S)\to \gr(R)_{ind}$. Let $U$ be
the enveloping algebra of $\gr(S)$. Then $U$ is the free
associative $\F_p[\pi]$-algebra on the free generators of $S$.
There is a canonical embedding of $\gr(S)$ into $U$ with $P(x)=\pi
x$ if $p\ne2$. If $p=2$ we have $P(x)=x^2+\pi x$ if $x$ is of
degree $1$ and $P(x)=\pi x$ if $x$ is of degree $>1$. Note that
$\gr(R,S)$ and $\gr(R)_{ind}$ are $U$-modules via the adjoint
representation and that $\iota$ is a homomorphism of $U$-modules.

\begin{theorem}\label{equiv}
The following are equivalent
\begin{enumerate}[{\rm(A)}]
\item We have $R^{(n,S)} = R\cap S^{(n+1)}$ for $n\ge1$.
\item The homomorphism $\iota$ is injective.
\item The homomorphism $\iota$ is surjective.
\end{enumerate}
\end{theorem}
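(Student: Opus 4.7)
The plan is to exploit the fact that $\iota$ is graded: it is the direct sum over $n \geq 1$ of the components
\[
\iota_n : R^{(n,S)}/R^{(n+1,S)} \longrightarrow (R \cap S^{(n+1)})/(R \cap S^{(n+2)})
\]
induced by the inclusion of Lemma~\ref{L2.1}, so (B) (resp.\ (C)) is equivalent to each $\iota_n$ being injective (resp.\ surjective). If (A) is given, then for every $n$ the source and target of $\iota_n$ have identical numerators and denominators and $\iota_n$ is the identity map, whence (A) implies both (B) and (C). The work therefore lies in the two reverse directions.

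For (B) $\Rightarrow$ (A) I would induct on $n$, with base $R^{(1,S)} = R = R \cap S^{(2)}$ (the second equality using that $R \subset S^{(2)}$ in a minimal presentation). Assuming $R^{(n,S)} = R \cap S^{(n+1)}$, pick any $x \in R \cap S^{(n+2)}$; since $S^{(n+2)} \subset S^{(n+1)}$, the inductive hypothesis places $x$ in $R^{(n,S)}$, so $x$ represents a class in $\gr_n(R,S)$. The image of that class under $\iota_n$ is the class of $x$ in $(R \cap S^{(n+1)})/(R \cap S^{(n+2)})$, which vanishes because $x \in R \cap S^{(n+2)}$. Injectivity of $\iota_n$ then forces $x \in R^{(n+1,S)}$, which combined with the reverse inclusion from Lemma~\ref{L2.1} closes the induction.

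For (C) $\Rightarrow$ (A), surjectivity of $\iota_n$ means $R \cap S^{(n+1)} = R^{(n,S)} \cdot (R \cap S^{(n+2)})$. Iterating this and absorbing $R^{(m,S)} \subset R^{(n,S)}$ for $m \geq n$ yields $R \cap S^{(n+1)} = R^{(n,S)} \cdot (R \cap S^{(n+k)})$ for every $k \geq 2$, and therefore
\[
R \cap S^{(n+1)} \;\subset\; \bigcap_{k \geq 2} R^{(n,S)} \cdot S^{(n+k)}.
\]
The main obstacle is the identity $\bigcap_{k} R^{(n,S)} \cdot S^{(n+k)} = R^{(n,S)}$, which I would establish by a compactness argument: $R^{(n,S)}$ is a closed subgroup of the compact pro-$p$-group $S$, and $\bigcap_m S^{(m)} = 1$ since every open normal subgroup of $S$ contains some $S^{(m)}$ (as its quotient is a finite $p$-group). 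Given decompositions $x = r_k s_k$ with $r_k \in R^{(n,S)}$ and $s_k \in S^{(n+k)}$, a convergent subsequence $r_{k_j} \to r \in R^{(n,S)}$ exists by compactness of $R^{(n,S)}$; then $s_{k_j} = r_{k_j}^{-1} x \to r^{-1} x$ must lie in every $S^{(n+m)}$, forcing $r^{-1} x = 1$ and hence $x \in R^{(n,S)}$. This is the only step in the proof that uses the pro-$p$ topology of $S$ in an essential way; everything else is bookkeeping on the two filtrations.
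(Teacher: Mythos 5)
Your proposal is correct and follows essentially the same route as the paper: the (B)\,$\Rightarrow$\,(A) induction is identical, and your compactness argument showing $\bigcap_k R^{(n,S)}S^{(n+k)} = R^{(n,S)}$ is just a repackaging of the paper's device of writing $x$ as a convergent infinite product $\prod y_i$ of elements $y_i \in R^{(n+i,S)}$ and invoking closedness of $R^{(n,S)}$. (Only cosmetic caveat: since the generating set $I$ may be uncountable, $S$ need not be metrizable, so replace your convergent subsequence by the finite-intersection-property applied to the nested nonempty closed sets $R^{(n,S)} \cap xS^{(n+k)}$.)
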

\begin{proof}
Note that (A) holds for $n=1$ since $R\subset S^{(2)}$.

Assume that $\iota$ is injective and that (A) holds for some
$n\ge1$. Let $x\in R\cap S^{(n+2)}$. Then $x\in R\cap
S^{(n+1)}=R^{(n,S)}$. If $\xi$ is the image of $x$ in $\gr_n(R,S)$
we have $\iota_n(\xi)=0$ which implies $x\in R^{(n+1,S)}$. Hence
(A) holds for $n+1$ and by induction for all $n$.

Assume that $\iota$ is surjective and let $x\in R\cap S^{(n+1)}$.
Then there exists $y_0\in R^{(n,S)}$ such that $x_1=y_0^{-1}x\in
R\cap S^{(n+2)}$. In the same way we define inductively $y_i$ such
that $y_i\in R^{(n+i,S)}$ and $x_{i+1}=y_i^{-1}x_i\in R\cap
S^{(n+2+i)}$ for $i\ge0$ with $x_0=x$. Then $x=\prod y_i\in
R^{(n,S)}$.
\end{proof}

\begin{corollary}
Let $G=S/R$ be a minimal presentation of $G$ of finite type. Then
$i$ is surjective if $\iota(\gr_1(R,S))$ generates $\gr(R)_{ind}$
as an ideal of $\gr(S)$.
\end{corollary}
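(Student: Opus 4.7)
The plan is short: exploit the fact that $\iota$ is a $U$-module homomorphism, so that its image is automatically a $U$-submodule of $\gr(R)_{ind}$, and then invoke the hypothesis to conclude that this submodule is everything.

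First I would recall from the paragraph immediately preceding Theorem~\ref{equiv} that $\iota\colon\gr(R,S)\to\gr(R)_{ind}$ is a homomorphism of $U$-modules, where $U$ is the enveloping algebra of $\gr(S)$ and the actions on both sides are given by the adjoint representation. Since $\iota$ is $U$-linear, the image $M:=\iota(\gr(R,S))$ is a $U$-submodule of $\gr(R)_{ind}$.

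Second, I would note that because $\gr(R)_{ind}$ is itself an ideal of $\gr(S)$, a $U$-submodule of $\gr(R)_{ind}$ is the same thing as an ideal of $\gr(S)$ that happens to be contained in $\gr(R)_{ind}$; the mixed Lie algebra structure needed in the case $p=2$ is absorbed into the $U$-module structure via Lazard's embedding of $\gr(S)$ in $U$ with $P(x)=x^2+\pi x$ on degree $1$. In particular $M$ is an ideal of $\gr(S)$ contained in $\gr(R)_{ind}$.

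Third, $M$ contains $\iota(\gr_1(R,S))$ by construction, and therefore contains the ideal of $\gr(S)$ generated by $\iota(\gr_1(R,S))$. By hypothesis that ideal is already all of $\gr(R)_{ind}$, so $M=\gr(R)_{ind}$ and $\iota$ is surjective, as required.

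There is no real obstacle here: the content of the corollary is essentially the remark that a $U$-linear map is surjective as soon as its image contains a generating set for the target as a $U$-module. The finite-type hypothesis plays no direct role in the deduction beyond ensuring that the filtrations are discrete, so that the passage between the pro-$p$ group data and the graded Lie algebra data is faithful and the notion of "generating as an ideal" behaves as expected.
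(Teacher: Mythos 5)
Your proposal is correct and matches the paper's (implicit) reasoning exactly: the paper states this corollary without proof, and in the proof of the following theorem justifies the analogous step with precisely your observation that $\iota$ is a $U$-module homomorphism, so its image is a $U$-submodule (equivalently, an ideal of $\gr(S)$ stable under $P$) containing $\iota(\gr_1(R,S))$ and hence all of $\gr(R)_{ind}$. Nothing is missing.
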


The elements of $i(\gr_1(R,S))\subset \gr_2(S)$ are images of
elements $r$ of $R$ under the canonical mapping of $S^{(2)}$ onto
$\gr_2(S)$. These images are called initial forms of the elements
$r$. If $\iota$ is bijective and $R\ne 1$ then $R$ has a minimal
generating set whose initial forms are of degree $2$. In this case
the presentation $G=S/R$ is called {\bf quadratic}. If $G=F/R$ is
of finite type we say it is {\bf quadratically defined} if it is
quadratic and the set of initial forms of a minimal generating set
for $R$ generate $\gr(R)_{ind}$ as an ideal of $\gr(S)$. The
latter is true if the set of initial forms is strongly free, cf.
\cite{lm}, \cite{lab2}. The group $G$ is said to be quadratically
defined if it has a minimal presentation which is quadratically
defined.

\begin{theorem}
Let $G=S/R$ be a minimal presentation of $G$ of finite type. Then
$i$ is bijective iff $R=1$ or $G=S/R$ is quadratically defined.
\end{theorem}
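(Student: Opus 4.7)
The plan is to analyze bijectivity of $\iota$ by separately unpacking what injectivity at degree $1$ and surjectivity at all degrees force, and to apply Theorem~\ref{equiv} together with the Corollary preceding the statement in the reverse direction.

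For the forward direction, suppose $\iota$ is bijective and $R\neq 1$. Since $R\subseteq S^{(2)}$, the map
\[
\iota_1\colon R/R^{(2,S)}\lra R/(R\cap S^{(3)})
\]
has kernel $(R\cap S^{(3)})/R^{(2,S)}$, so its injectivity forces $R^{(2,S)}=R\cap S^{(3)}$. Any minimal generating set $\{r_i\}$ for $R$ as a closed normal subgroup of $S$ projects to an $\F_p$-basis of $R/R^{(2,S)}$, so each $r_i$ lies in $S^{(2)}\setminus S^{(3)}$ and therefore has initial form of degree $2$ in $\gr(S)$; in particular the presentation is quadratic. For the ideal-generation condition, the recursion $R^{(n+1,S)}=(R^{(n,S)})^p[R^{(n,S)},S]$ shows that $\gr(R,S)$ is generated as a $U$-module by $\gr_1(R,S)$. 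Since $\iota$ is a surjective $U$-module homomorphism, $\gr(R)_{ind}$ is generated as a $U$-module --- equivalently, as an ideal of $\gr(S)$ that is closed under $P$ --- by $\iota(\gr_1(R,S))$, which is spanned by the initial forms of the $r_i$. Hence $G=S/R$ is quadratically defined.

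For the backward direction, if $R=1$ then $\gr(R,S)=\gr(R)_{ind}=0$ and $\iota$ is trivially bijective. If $G=S/R$ is quadratically defined then by definition the initial forms of a minimal generating set for $R$, which span $\iota(\gr_1(R,S))$, generate $\gr(R)_{ind}$ as an ideal of $\gr(S)$; the Corollary then gives surjectivity of $\iota$, and Theorem~\ref{equiv} upgrades this to bijectivity.

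The main structural point, used in both directions, is that $\gr(R,S)$ is generated as a $U$-module by $\gr_1(R,S)$: moving from $\gr_n(R,S)$ to $\gr_{n+1}(R,S)$ is realized precisely by applying the $P$-operator and by adjoint action of $\gr_1(S)$, straight from the filtration recursion. Once this is in hand, bijectivity of $\iota$ transports cleanly between the two graded objects, and the theorem is a direct application of Theorem~\ref{equiv} and the Corollary; the only subtle point is remembering that ``ideal of $\gr(S)$'' in the definition of ``quadratically defined'' must be read as ``$U$-submodule,'' which is legitimate precisely because the filtration $R^{(n,S)}$ builds in $p$-th powers.
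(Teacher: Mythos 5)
Your proof is correct and its overall skeleton is the same as the paper's: the backward direction is exactly the paper's (quadratically defined $\Rightarrow$ the initial forms generate $\gr(R)_{ind}$ as an ideal $\Rightarrow$ $\iota$ surjective by the Corollary $\Rightarrow$ bijective by Theorem~\ref{equiv}), and the forward direction in both cases reduces to showing that $\gr_1(R,S)$ generates $\gr(R,S)$ as a $U$-module. The one genuine difference is how that generation statement is obtained. The paper identifies $\gr(R,S)$ with its image in $\gr(S)$ and runs an induction in which the \emph{injectivity} of $\iota$ is used to guarantee that a representative $u\in R^{(n,S)}$ with nonzero class in $\gr_n(R,S)$ actually has degree exactly $n+1$ in $\gr(S)$, so that only the terms $u^p$ with $\deg u=n$ and $[u,v]$ with $\deg v=1$ survive in the relevant graded piece. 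You instead read the generation off intrinsically from the recursion $R^{(n+1,S)}=(R^{(n,S)})^p[R^{(n,S)},S]$, using that $P$ and the adjoint action of $\gr_1(S)$ are well defined on $\gr(R,S)$ (which rests on $[R^{(n,S)},S^{(m)}]\subseteq R^{(n+m,S)}$, the same compatibility the paper already invokes when declaring $\gr(R,S)$ a $U$-module). This is legitimate and slightly more economical: it shows that surjectivity of $\iota$ alone forces the ideal-generation half of ``quadratically defined,'' with injectivity needed only for the degree-$2$ (``quadratic'') half via $R^{(2,S)}=R\cap S^{(3)}$ --- a part you spell out explicitly, whereas the paper disposes of it in the remark preceding the theorem. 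Since Theorem~\ref{equiv} makes injectivity and surjectivity equivalent anyway, the two routes prove the same statement; yours just localizes more precisely where each hypothesis is used.
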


\begin{proof}
If $G=S/R$ is quadratically defined then $i(\gr_1(R,S))$ is a
generating set for $\gr(R)_{ind}$ as an ideal of $\gr(S)$. Then
$i$ is surjective since it is a $U$-module homomorphism and hence
bijective by  Theorem~\ref{equiv}.

Conversely, suppose that $i$ is bijective and identify $\gr(R,S)$
with its image in $\gr(S)$. To prove that $G=S/R$ is quadratically
defined it suffices to prove that $\gr_1(R,S)$ generates
$\gr(R,S)$ as a $U$-module. Let $M$ be the $U$-submodule of
$\gr(R,S)$ generated by $\gr_1(R,S)$. We have $M_1=\gr_1(R,S)$.
Suppose that $M_n=\gr_n(R,S)$ and let $\xi\in\gr_{n+1}(R,S)$,
$\xi\ne0$. If $x\in R^{(n+1,S)}$ is a representative of $\xi$ then
$x$ is a product of elements of the form $u^p$, $[u,v]$ with $u\in
R^{(n,S)}$, $v\in S$. Since $i$ is injective these elements lie in
$\gr_{n+1}(S)$ unless the degree of $u$ is $n$ and the degree of
$v$ is $1$. It follows that $\xi$ is a linear combination of
elements of the form $\pi\eta$, $[\eta,\zeta]$ with $\eta\in M_n$,
$\zeta\in U_1$ and hence that $\xi\in M_{n+1}.$
\end{proof}

\begin{question}
Let $G$ be isomorphic to the Galois group of a maximal
$p$-extension of a field, and let $i$ be a natural graded Lie
algebra homomorphism $i:\gr(R,S)\to \gr(R)_{ind}$. When is $i$ an
isomorphism?
\end{question}

If $F$ is a global field of characteristic $\ne p$ which is
totally imaginary if $F$ is a number field and $p=2$, then by
the results of \cite{lm} and \cite{sch}, $G=\Gal(F(p)/F)$ is a
projective limit of quadratically defined presentations. More
precisely, the group $G$ has a presentation $S/R$ where $S=\cup
S_i$ {$i\ge 1$} with $S_i\subset S_{i+1}$ finitely generated and,
if $R_i$ is the image of $R$ under the canonical projection of $S$
onto $S_i$, we have $S_i/R_i$ quadratically defined for all $i$.
By Theorem~\ref {equiv} this means that $R_i^{(n,S_i)}=R_i\cap
S_i^{(n+1)}$ for all $i,n$ which implies $R^{(n,S)}=R\cap
S^{(n+1)}$ for all $n$ since $S$ is the projective limit of the
$S_i$. Hence, by Theorem~\ref{equiv}, the map $\iota$ is an
isomorphism. The same is true if $F$ is a local field with
$\zeta_p$ in $F$ since then $G=\Gal(F(p)/F)$ is a Demushkin group
which is quadratically defined by \cite{lm}, \cite{lab2}. We thus
obtain the following result.

\begin{theorem} Let $F$ be a field containing a primitive $p$-th root of unity. If
$F$ is a global field, which is totally imaginary if $F$ is a
number field and $p=2$, or a local field containing a primitive
$p$-th root of unity then $F$ is quadratically defined.
\end{theorem}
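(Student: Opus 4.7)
The plan is to reduce the theorem to Theorem~\ref{equiv} by supplying presentations of $G=\Gal(F(p)/F)$ in which the hypothesis ``quadratically defined'' (or its local analogues) has already been established in the literature, and then split the argument into the local and global cases.

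First I would dispose of the local case. When $F$ is a local field containing $\zeta_p$, the group $G=\Gal(F(p)/F)$ is a Demushkin group. By the results of \cite{lm} and \cite{lab2}, every Demushkin group admits a quadratically defined minimal presentation $S/R$ (the single Demushkin relation has initial form of degree $2$, and this initial form is strongly free in the sense recalled in the paragraph before the theorem). Applying the preceding theorem (Theorem on bijectivity of $\iota$) to such a presentation, the map $\iota\colon\gr(R,S)\to\gr(R)_{ind}$ is bijective, so $G$ is quadratically defined, as required.

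For the global case, I would invoke the structural results of \cite{lm} and \cite{sch}: $G$ admits a minimal presentation $S/R$ where $S=\bigcup_{i\ge1}S_i$ is an ascending union of finitely generated free pro-$p$ subgroups (or rather, $S$ is the projective limit of a compatible tower of finitely generated free pro-$p$ groups $S_i$), and if $R_i$ denotes the image of $R$ under the canonical projection $S\to S_i$, then each finite-level presentation $S_i/R_i$ is quadratically defined. The preceding theorem then gives that $\iota_i\colon\gr(R_i,S_i)\to\gr(R_i)_{ind}$ is bijective for every $i$, which by Theorem~\ref{equiv} translates into the equalities
\[
R_i^{(n,S_i)}=R_i\cap S_i^{(n+1)}\qquad\text{for all }n\ge1\text{ and all }i.
\]

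Finally I would pass to the projective limit. The filtrations $\{R^{(n,S)}\}$ and $\{R\cap S^{(n+1)}\}$ are both compatible with the quotient maps $S\to S_i$ sending $R\to R_i$, because the operations $(-)^p$ and $[-,S]$ commute with continuous surjections of pro-$p$ groups. Taking the limit over $i$ of the equalities $R_i^{(n,S_i)}=R_i\cap S_i^{(n+1)}$, I obtain $R^{(n,S)}=R\cap S^{(n+1)}$ for every $n\ge1$; by Theorem~\ref{equiv} this implies $\iota$ is bijective, and then the preceding theorem gives that $G=S/R$ is quadratically defined. The main delicate point I expect is precisely this final limit step: one must check that nothing is lost in passing from the finite-level equalities to the inverse limit, i.e.\ that both filtrations on $R$ are cofinal with the inverse system induced from the $R_i$. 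Granted this, combining the two cases yields the stated theorem.
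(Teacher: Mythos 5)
Your proposal is correct and follows essentially the same route as the paper: the local case via the Demushkin/quadratically defined results of \cite{lm}, \cite{lab2}, and the global case via the tower $S=\cup S_i$ with each $S_i/R_i$ quadratically defined from \cite{lm}, \cite{sch}, translated through Theorem~\ref{equiv} into $R_i^{(n,S_i)}=R_i\cap S_i^{(n+1)}$ and then passed to the limit. The limit step you flag as delicate is exactly the point the paper also asserts (with no more detail than you give), so there is nothing missing relative to the paper's own argument.
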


\begin{question}
Is $\Gal(\Q(2)/\Q)$ quadratically defined?
\end{question}

\section{Acknowledgements}\label{S4}

We are very grateful to Alejandro Adem and Wenfeng Gao, who
made important contributions towards this work in its early
stages.  Alejandro Adem's continuous interest, and discussions,
have been a considerable encouragement for our work.


\begin{thebibliography}{ABCDE}
\bibitem[A-K-M]{a-k-m} A. Adem, D. Karagueuzian and J. Min\'{a}\v{c}, {\em On the cohomology of Galois groups determined
by Witt rings,} Advances in Math. {\bf 148} (1999), 105--160.
\bibitem[Be]{be} E. Becker, {\em Euklidische K\"{o}rper und euklidische
H\"{u}llen von K\"{o}rpern,} J. reine angew. Math. {\bf 268/269}
(1974), 41--52.
\bibitem[Ch-E-M]{ch-e-m} S. Chebolu, I. Efrat and J. Min\'a\v{c}.
Quotients of absolute Galois \linebreak groups which determine the entire
Galois cohomology. (http://arxiv.org \\ /PS\_cache/arxiv/pdf/0905/0905.1364v2.pdf.)
\bibitem[F-V]{f-v} I. B. Fesenko and S. V. Vostokov.  Local Fields
and Their Extensions. Second Edition, AMS Translations of Mathematical
Monographs \textbf{121}, 2002.
\bibitem[G-M]{g-m} W. Gao and J. Min\'{a}\v{c}, \emph{Milnor conjecture and Galois theory I}, Fields Institute
Communications, AMS {\bf 16} (1997), 95--110.
\bibitem[Ha-W]{ha-w} C. Haesemeyer and Ch. Weibel.  Norm varieties
and the chain lemma (after Markus Rost).  \textit{Proc. Abel Symposium},
to appear.
\bibitem[H-S]{h-s} G. Hochschild and J-P. Serre, {\em Cohomology of
group extensions,} Trans. AMS {\bf 74} (1953), 110--134.
\bibitem[J-W]{j-w} B. Jacob and R. Ware, \emph{A recursive description of the maximal pro-$2$-group via Witt rings},
Math. Zeit. {\bf 200} (1989) 379--396.
\bibitem[Ko1]{ko1} H. Koch, {\em Galoische theorie der $p$-Erweiterungen,}
Grundlehren der mathematische Wissenchaften, Springer-Verlag, 1970.
\bibitem[Ko2]{ko2} \bysame, {\em On $p$-extensions with given
ramification,} Appendix 1 in K. Haberland, VEB Deutscher Verlag Der
Wissenchaften, Springer-Verlag, 1978.
\bibitem[Lab1]{lab1} J. Labute, \emph{Alg\`ebres de Lie et
pro-$p$-groupes definis par une seule relation,}
Invent. Math. {\bf 4} (1967), 142--158.
\bibitem[Lab2]{lab2} \bysame, \emph{Mild pro-$p$-groups and Galois groups of $p$-extensions of
$\Q$}, J. Reine Angew. Math. 596 (2006), 115-130.
\bibitem[LM]{lm} J. Labute and J. Min\'{a}\v{c}, \emph{Mild Pro-$2$-Groups and $2$-Extensions of $\Q$ with Restricted
Ramification}, preprint.
\bibitem[Laz1]{laz1} M. Lazard, {\em Groupes analytiques $p$-adiques,}
Publ. Math. Inst. Hautes Etud. {\bf 26} (1965), 389--603.
\bibitem[M-S]{m-s} A. S. Merkurjev and A. A. Suslin. $K$-cohomology
and Brauer-Severi varieties and the norm residue homomorphism.
\textit{Math. USSR. Izv.} {\bf 21} (1983), 307--340.
\bibitem[Mi]{mi} J. Milnor, {\em Algebraic K-theory and quadratic
forms,} Invent. Math. {\bf 9} (1970), 318--344.
\bibitem[Mi-Sp1]{mi-sp1} J. Min\'{a}\v{c} and M. Spira, {\em Formally real fields, pythagorean fields, C-fields and W-groups,}
Math. Z. {\bf 205} (1990), 519--530.
\bibitem[Mi-Sp2]{mi-sp2} \bysame, {\em Witt rings and Galois groups,} Annals of Mathematics {\bf 144}
(1996), 35--60.
\bibitem[N-S-W]{n-s-w} J. Neukirch, A. Schmidt and K. Winberg, {\em Cohomology of Number Fields,} Springer-Verlag, vol.
{\bf 323} (2000).
\bibitem[Ro1]{ro1} M. Rost, {\em Chain lemma for symbols}. (Available at
www.math.uni-bielefeld.de/~rost/chain-lemma.html.)
\bibitem[Ro2]{ro2} \bysame, {\em On the basic correspondence of a splitting
variety}. (Available at:  www.math.uni-bielefeld.de/~rost/chain-lemma.html.)
%\bibitem[Sc]{sc} W. Scharlau, {\em Quadratic and Hermitian forms,}
%Grundlehren mat. Widd. {\bf 270}, Springer-Verlag, Berlin, 1985.
\bibitem[Se1]{se1} J.-P. Serre, {\em Lie algebra and Lie groups,}
Lectures given at Harvard University, Reading, Mass., 1964.
\bibitem[Se2]{se2} \bysame, {\em Cohomologie Galoisienne,} Fifth
Edition, Lecture Notes in Mathematics, Vol. 5, Springer Verlag, 1995.
\bibitem[Sch]{sch} A. Schmidt, \emph{\"{U}ber Pro-$p$-Fundamentalgruppen markierter arithmetischer Kurven},
to appear in J. Reine Angew. Math., Engl.
Transl.:arXiv:0806.1863[math.NT].
\bibitem[S-J]{S-J} A. A. Suslin and S. Joukhovitski.  Norm varieties.
\textit{J. Pure Appl. Algebra} {\bf 206} (2006), 235--276.
\bibitem[Voe1]{voe1} V. Voevodsky. Motivic cohomology with
$\Z/2$-coefficients. \textit{Publ. Inst. Hautes Etudes Sci.}
\textbf{98} (2003) 59--104.
\bibitem[Voe2]{voe2} V. Voevodsky. On Motivic cohomology
with $\Z/l$-coefficients. K-theory preprint archive no. 639.
(Available: www.math.uiuc.edu/K-theory/0639.)
\bibitem[Voe3]{voe3} V. Voevodsky. Motivic Eilenberg-MacLane
spaces. K-theory preprint 2007.
%\bibitem[Wad]{wad} A. R. Wadsworth, {\em Merkurjev's elementary
%proof of Merkurjev's theorem,}
%Cont. Math. {\bf 55}, Part II, (1986), 741--776.
%\bibitem[War]{war} R. Ware, {\em When are Witt rings group rings?  II,}
%Pac. J. Math. {\bf 76} (1978), 541--564.
\bibitem[Wei1]{wei1} Ch. Weibel.  The norm residue isomorphism
theorem.  \textit{J. Topology} \textbf{2} (2009) 346--372.
\bibitem[Wei2]{wei2} Ch. Weibel.  The proof of the Bloch-Kato
Conjecture. \textit{ICTP Lecture Notes Series} \textbf{23} (2008)
1--28.
\bibitem[W\"{u}r]{wur} T. W\"{u}rfel, {\em On a class of pro-$p$-groups occuring in Galois theory,} Journal of
Pure and Applied Algebra {\bf 36} (1985), 95--103.
%\bibitem[Z-S]{z-s} O. Zariski and P. Samuel, {\em Commutative algebra,}
%Second edition, Springer-Verlag, New York, 1975.
\end{thebibliography}
\end{document}